\DeclareSymbolFont{cyrletters}{OT2}{wncyr}{m}{n}
\numberwithin{equation}{section} \numberwithin{figure}{section}
\DeclareSymbolFont{cyrletters}{OT2}{wncyr}{m}{n}
\DeclareMathSymbol{\Sha}{\mathalpha}{cyrletters}{"58}
\DeclareMathSymbol{\Be}{\mathalpha}{cyrletters}{"42}
\newcommand\E{\mathbb{E}}
\renewcommand\P{\mathbb{P}}
\newcommand\Z{\mathbb{Z}}
\newcommand\N{\mathbb{N}}
\newcommand\R{\mathbb{R}}
\renewcommand{\b}{\mathbf}
\renewcommand{\leq}{\leqslant}
\renewcommand{\geq}{\geqslant}
\renewcommand{\#}{\sharp}
\newtheorem{lemma}{Lemma}
\newtheorem{theorem}[lemma]{Theorem}
\newtheorem{corollary}[lemma]{Corollary}
\theoremstyle{definition}
\numberwithin{lemma}{section}
\subjclass[2022] {
11A25 
11L20 
} 
\title{Higher Moments for Polynomial Chowla}
\author{Cameron Wilson}
\address{Mathematics Department    \\ Glasgow University
\\   Glasgow   \\ G12 8QQ  \\  UK}  
\email{c.wilson.6@research.gla.ac.uk}
\date{\today}
\begin{document}

\begin{abstract}
    Let $\lambda(n)$ be the Liouville function. We study the distribution of
    \[
    \frac{1}{x^{1/2}}\sum_{x\leq n\leq 2x}\lambda(f(n))
    \]
    over random polynomials $f$ of fixed degree $d$ and coefficients bounded in magnitude by $H$. In particular we prove that the first $d+1$ moments are Gaussian.
\end{abstract}

\maketitle

\setcounter{tocdepth}{1}
\tableofcontents

\section{Introduction}
Let $\lambda(n)$ be the Liouville function, which we extend to all integers by setting $\lambda(0)=0$ and $\lambda(-n)=\lambda(n)$ for all $n\in\N$. Suppose $f\in\Z[t]$ is a polynomial which is not of the form $f(x)=cg^2(x)$ for some constant $c\in \Z$, and some $g\in\Z[t]$. In 1965, Chowla \cite{Chowla} conjectured that
\begin{equation}\label{Chowlaconjecture}
C_f(x) \coloneqq \sum_{x\leq n\leq 2x}\lambda(f(n)) = o(x).
\end{equation}
When $f(n)=an+h$ for some coprime integers $a,h$ this is equivalent to the Dirichlet's Theorem on primes in arithmetic progressions, but otherwise remains an open problem. More recently, various approximations to this conjecture have been studied: one may consider logarithmic averages of $\lambda(f(n))$, or study the conjecture for polynomials of particular shape, such as those which factorise as the product of $k$ linear factors. Alternatively we can ask whether $\lambda(f(n))$ changes sign infinitely. Significant progress in such problems have been made within the last decade in a variety of works due to K. Matom\"aki, M. Radziwi\l\l, T. Tao and J. Ter\"av\"ainen \cite{MRT1,MR1,MRT2,Tao1,TaoJoni0,TaoJoni1,TaoJoni2} and most recently in work of H. A. Helfgott and M. Radziwi\l\l \cite{HR}, C. Pilatte \cite{P} and J. Ter\"av\"ainen \cite{Joni1}. Connections of conjecture \eqref{Chowlaconjecture} to other problems are discussed in Section $3$ of \cite{Joni1}.

In this paper we consider the behaviour of $C_f(x)$ for random polynomials $f$ of fixed degree. For $f\in\Z[t]$, we use the notation $\|f\|$ to be the maximum absolute value of the coefficients of $f$. Then formally, we aim to study the distribution of $C_f(x)$ for polynomials of fixed degree such that $\|f\|\leq H$, as $H$ grows to infinity. This idea was first considered in work of Browning, Sofos and Ter\"av\"ainen \cite{TimEfJoni} who showed that if $d\geq 1$, $A\geq 1$ and $0< c<\frac{5}{19d}$ are fixed, then for sufficiently large $H$ there are at most $O\left(\frac{H^{d+1}}{(\log H)^{A}}\right)$ degree $d$ polynomials with $\|f\|\leq H$ such that
\[
\sup_{x\in[H^c,2H^c]}\frac{1}{x}\left|\sum_{1\leq n\leq x}\lambda(f(n))\right| > \frac{1}{(\log H)^A}.
\]
This result was obtained by bounding the second moment of $\frac{C_f(x)}{x}$ uniformly in $x\in[H^{c},2H^{c}]$ and applying Chebychev's inequality. The virtue of studying random polynomials is that one obtains some information on $C_f(x)$ of polynomials of high degree, for which conjecture \eqref{Chowlaconjecture} is completely wide open.\\

Our main result shows that for random polynomials of degree $d$, the first $d+1$ moments of $C_f(x)$ follows the Gaussian distribution. In particular, let $\mathcal{C}_k$ denote the $k$-th moment for the standard normal distribution. Then we have the following.

\begin{theorem}\label{MAINTHEOREM}
    Fix $A,\delta>0$ and $k,d\in\N$ such that $1\leq k\leq d+1$. Then for $x,H\geq 2$ such that $x\leq (\log H)^{\delta}$,
    \[
    \sum_{\substack{f\in\Z[t]\\ \deg(f)=d, \|f\|\leq H}}\left[\frac{1}{x^{1/2}}\sum_{x\leq n\leq 2x}\lambda(f(n))\right]^k = \mathcal{C}_k 2^{d+1} H^{d+1}\left(1+O_k\left(\frac{1}{x}\right)\right) + O_{A,d,k}\left(\frac{H^{d+1}}{(\log H)^{A}}\right)
    \]
    where the first implied constant depends only on $k$ and the second depends only on $A$, $d$ and $k$.
\end{theorem}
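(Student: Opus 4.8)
Write $S_k$ for the sum in the theorem. Opening the $k$-th power and interchanging the order of summation gives
\[
x^{k/2}S_k=\sum_{x\leq n_1,\dots,n_k\leq 2x}\ \sum_{\deg f=d,\,\|f\|\leq H}\ \prod_{i=1}^{k}\lambda(f(n_i)).
\]
The plan is to classify the tuples $(n_1,\dots,n_k)$ by their coincidence pattern, i.e. by the set partition $\pi$ of $\{1,\dots,k\}$ recording which $n_i$ agree. If $\pi$ has blocks of sizes $e_1,\dots,e_r$ and the distinct values are $m_1,\dots,m_r\in[x,2x]$, then since $\lambda(N)^2=1$ for $N\neq 0$ the inner product collapses to $\prod_{l:\,e_l\text{ odd}}\lambda(f(m_l))$, a product over the $s(\pi):=\#\{l:e_l\text{ odd}\}$ points of odd multiplicity (the few $f$ with some $f(m_l)=0$ contribute $O(H^{d})$ and are discarded). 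The main term will come from the pairings ($s(\pi)=0$ with every $e_l=2$), of which there are $\mathcal C_k=(k-1)!!$; everything else must be shown to be an error.

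\textbf{The technical heart.} I would isolate the following correlation estimate: for distinct $m_1,\dots,m_s\in[x,2x]\cap\Z$ with $1\leq s\leq d+1$ and $x\leq(\log H)^{\delta}$,
\[
\Big|\sum_{\|f\|\leq H}\prod_{i=1}^{s}\lambda(f(m_i))\Big|\ll_{A,d,s}\frac{H^{d+1}}{(\log H)^{A}},
\]
the sum running over $\mathbf a=(a_0,\dots,a_d)\in[-H,H]^{d+1}$. Let $V$ be the $s\times(d+1)$ matrix with rows $V_i=(1,m_i,\dots,m_i^{d})$, so $f(m_i)=V_i\cdot\mathbf a$. Because $s\leq d+1$, the common kernel of the first $s-1$ rows has dimension $\geq d-s+2\geq 1$ and strictly contains the kernel of all $s$ rows; as every entry of $V$ is $\ll x^{d}\ll(\log H)^{\delta d}$, the relevant minors are of polylogarithmic size and Minkowski's theorem furnishes a primitive integer vector $\mathbf u$ with $\|\mathbf u\|\ll(\log H)^{O(\delta)}$, $V_i\cdot\mathbf u=0$ for $i\leq s-1$, and $q:=V_s\cdot\mathbf u\neq 0$ (hence $0<|q|\ll(\log H)^{O(\delta)}$). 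Extending $\mathbf u$ to a $\Z$-basis of $\Z^{d+1}$ and writing $\mathbf a=t\mathbf u+\sum_{j\geq 2}w_j\mathbf u_j$, the values $f(m_1),\dots,f(m_{s-1})$ depend only on $\mathbf w=(w_2,\dots,w_{d+1})$, while $f(m_s)=qt+c(\mathbf w)$, so the sum becomes
\[
\sum_{\mathbf w}\ \Big(\prod_{i<s}\lambda(f(m_i))\Big)\ \sum_{t\in I(\mathbf w)}\lambda\big(qt+c(\mathbf w)\big).
\]
The inner sum is a Liouville sum over an arithmetic progression of modulus $|q|\ll(\log H)^{O(\delta)}$; by the Siegel--Walfisz theorem for $\lambda$ it is $\ll_{B}|I(\mathbf w)|/(\log H)^{B}$ once $|I(\mathbf w)|$ exceeds a fixed power of $\log H$, and is trivially $\ll|I(\mathbf w)|$ otherwise. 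Summing the first bound against $\sum_{\mathbf w}|I(\mathbf w)|\leq(2H+1)^{d+1}$, and bounding the short fibres trivially (their number is $\ll(\log H)^{O(\delta)}H^{d}$), yields the lemma once $B$ is taken slightly larger than $A$.

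\textbf{Assembly.} With the lemma in hand the proof closes quickly. Every partition with $s(\pi)\geq 1$ has $s(\pi)\leq r\leq k\leq d+1$, so the lemma applies; there are $O_k(1)$ partitions and $\ll x^{k}$ value assignments per partition, and since $x\leq(\log H)^{\delta}$ the factor $x^{k}\leq(\log H)^{\delta k}$ is absorbed by enlarging $A$, giving total error $\ll_{A,d,k}H^{d+1}/(\log H)^{A}$. For $s(\pi)=0$ the inner sum is the count $\#\{f\}=2^{d+1}H^{d+1}(1+O_d(1/H))$; the $\mathcal C_k$ pairings each contribute $x^{k/2}(1+O_k(1/x))$ ordered choices of distinct values, whereas any all-even partition with a block of size $\geq 4$ has $r\leq k/2-1$ and so contributes only $O_k(x^{k/2-1}H^{d+1})$. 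Collecting terms, dividing by $x^{k/2}$, and noting that the constraint $\deg f=d$ versus $\|f\|\leq H$ alters every count by a negligible $O(H^{d})$, one obtains
\[
S_k=\mathcal C_k 2^{d+1}H^{d+1}\big(1+O_k(1/x)\big)+O_{A,d,k}\Big(\frac{H^{d+1}}{(\log H)^{A}}\Big),
\]
with odd $k$ automatically giving $\mathcal C_k=0$, since then no all-even partition exists. The main obstacle is the lemma: everything hinges on producing a \emph{short} integer kernel direction (which is exactly what forces the restriction $x\leq(\log H)^{\delta}$, keeping the Vandermonde minors and the modulus $q$ of polylogarithmic size) together with a version of Siegel--Walfisz for $\lambda$ uniform over such moduli; the boundary bookkeeping for short fibres is the only genuinely fiddly point, and crucially this route needs only the prime number theorem in progressions rather than any unproven Chowla-type input.
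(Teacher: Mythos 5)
Your combinatorial skeleton coincides with the paper's: open the $k$-th power, sort tuples by coincidence pattern, let $\lambda^2=1$ collapse each block to its parity, extract the main term $(k-1)!!\,2^{d+1}H^{d+1}$ from the perfect pairings (with the same $O_k(1/x)$ from ordered distinct choices and the same $O(H^d)$ losses for degenerate $f$), and reduce everything else to the correlation bound $\bigl|\sum_{\|f\|\leq H}\prod_{i\leq s}\lambda(f(m_i))\bigr|\ll_{A,d,s} H^{d+1}/(\log H)^{A}$ for distinct $m_1,\dots,m_s\in[x,2x]$ with $s\leq d+1$ --- the hypothesis $k\leq d+1$ enters at exactly the same point in both arguments. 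Where you genuinely diverge is in the proof of that correlation bound. The paper runs a circle method (Lemmas \ref{circlemethod} and \ref{circlemethodbound}): orthogonality converts the sum over $f$ into an integral of Liouville exponential sums $S(\alpha_i,\cdot)$ against $d+1$ Dirichlet kernels $D_H(\sum_i m_i^j\alpha_i)$; the $S$'s are beaten by Davenport's theorem, the surplus kernels are bounded trivially, and the remaining $L$ are integrated in $L^1$ after a Vandermonde change of variables. You instead slice the coefficient box along a short primitive vector $\mathbf u$ annihilating $V_1,\dots,V_{s-1}$ but not $V_s$, reducing to one-dimensional Liouville sums over progressions of polylogarithmic modulus. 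Both routes exploit the same structural fact, namely that the Vandermonde rows are independent and of polylogarithmic height (this is precisely where $x\leq(\log H)^{\delta}$ is used); your analytic input (Siegel--Walfisz for $\lambda$) is weaker than Davenport's minor-arc bound, while the paper's version avoids any fibre decomposition and yields the explicit dependence on $\prod|m_i-m_j|$.

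Two points in your sketch need repair, though neither is fatal. First, the claim that $\sum_{t\in I}\lambda(qt+c)\ll_B |I|/(\log H)^{B}$ once $|I|$ exceeds a fixed power of $\log H$ is false as stated: for $|I|=(\log H)^{C}$ it would assert essentially total cancellation in a progression of polylogarithmic length, which no known result provides. Raise the threshold to, say, $|I|\geq H^{1/2}$, where $\log|I|\asymp\log H$ and Siegel--Walfisz genuinely gives $|I|/(\log H)^{B}$; the fibres with $|I|<H^{1/2}$ are then bounded trivially and contribute $\ll H^{d+1/2}(\log H)^{O(1)}$, since there are only $\ll H^{d}(\log H)^{O(1)}$ nonempty fibres. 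Second, Minkowski's first theorem (or Siegel's lemma) alone produces a short vector in the kernel of $V_1,\dots,V_{s-1}$ which could a priori also lie in the kernel of $V_s$; you should invoke Minkowski's second theorem on the rank-$(d-s+2)$ kernel lattice (all successive minima are polylogarithmic because their product is bounded by the lattice determinant and each is at least $1$) to obtain $d-s+2$ independent short vectors, at least one of which escapes the corank-one sublattice. With these adjustments your argument closes.
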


The main attractive feature of this result is that it provides information on $\frac{C_f(x)}{x^{1/2}}$ as opposed to $\frac{C_f(x)}{x}$, suggesting a stronger bound than \eqref{Chowlaconjecture} is true on average for random polynomials $f$.

Our methods differ greatly from those of Browning, Sofos and Ter\"av\"ainen, which use information on the equidistribution of $\lambda(n)$ in arithmetic progressions in short intervals. Instead we will employ a version of the circle method similar to that used in Section $3$ of \cite{AlexeiandEf}. This method actually allows us to prove the following more general version the Theorem \ref{MAINTHEOREM}.

\begin{theorem}\label{GENERALTHEOREM} Fix $A,\delta>0$, $s\in\N$ and $\b{d}\in\N^s$. Define $D=\sum_{i=1}^{s}(d_i+1)$ and $\mathfrak{d}=\max\{d_1,\ldots,d_s\}$. Finally, suppose $k\in\N$ is such that $1\leq k\leq \mathfrak{d}+1$. Then for $x,H\geq 2$ such that $x\leq (\log H)^{\delta}$,
\[
\mathop{\sum\cdots\sum}_{\substack{f_1,\cdots,f_s\in\Z[t]\\ \deg(f_i)=d_i,\|f_i\|\leq H \\\forall 1\leq i\leq s}}\left[\frac{1}{x^{1/2}}\sum_{x\leq n\leq 2x}\prod_{i=1}^{s}\lambda(f_i(n))\right]^{k} = \mathcal{C}_{k}2^DH^D\left(1+O_k\left(\frac{1}{x}\right)\right) + O_{A,\mathfrak{d},k}\left(\frac{H^D}{(\log H)^{A}}\right)
\]
where $\mathcal{C}_k$ is as defined in Theorem \ref{MAINTHEOREM}, and where the first implied constant depends only on $k$ and the second depends only on $A$, $\mathfrak{d}$ and $k$.
\end{theorem}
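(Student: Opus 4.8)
The plan is to expand the $k$-th power, interchange the order of summation so that the sum over polynomials sits innermost, and isolate a main term coming from ``diagonal'' configurations of the resulting variables. Writing out the $k$-th power introduces variables $n_1,\dots,n_k\in[x,2x]$, and after interchanging summation the average factorises over the $s$ polynomials:
\[
\mathrm{LHS}=\frac{1}{x^{k/2}}\sum_{x\leq n_1,\dots,n_k\leq 2x}\ \prod_{i=1}^{s}T_{d_i}(n_1,\dots,n_k),\qquad T_{d}(\mathbf{n})\coloneqq\sum_{\substack{f\in\Z[t]\\ \deg f=d,\ \|f\|\leq H}}\prod_{j=1}^{k}\lambda(f(n_j)).
\]
Since $\lambda$ takes values in $\{-1,0,1\}$, for each $f$ the inner product collapses to $\prod_{v}\lambda(f(v))^{m_v}=\prod_{v:\,m_v\text{ odd}}\lambda(f(v))$, where $v$ runs over the distinct values taken by $n_1,\dots,n_k$ and $m_v$ is the multiplicity of $v$ (terms with $f(v)=0$ vanish). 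Thus each tuple $\mathbf{n}$ contributes through the set $S(\mathbf{n})=\{v:m_v\text{ odd}\}$ of odd-multiplicity values, which is the same for every $f_i$.

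First I would treat the main term. Configurations with $S(\mathbf{n})=\varnothing$ are exactly those in which every value occurs with even multiplicity; among these the dominant ones are the perfect pairings, of which there are $\mathcal{C}_k$, each contributing $x^{k/2}(1+O_k(1/x))$ admissible tuples (namely $k/2$ distinct pair-values in $[x,2x]$). For such $\mathbf{n}$ one has $T_{d_i}(\mathbf{n})=\#\{f_i:f_i(v)\neq0\ \forall v\}=(2H)^{d_i+1}(1+O(1/H))$, the correction counting the few $f_i$ vanishing at some $v$, so that $\prod_i T_{d_i}(\mathbf{n})=2^DH^D(1+O(1/H))$. Summing, the perfect pairings yield $\mathcal{C}_k2^DH^D(1+O_k(1/x))$; even-multiplicity configurations that are not perfect pairings involve fewer than $k/2$ distinct values and contribute $O(2^DH^D/x)$, which is absorbed. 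For odd $k$ no even-multiplicity configuration exists, matching $\mathcal{C}_k=0$.

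The hard part is the off-diagonal estimate: I must show that every tuple with $S(\mathbf{n})\neq\varnothing$ satisfies $\prod_iT_{d_i}(\mathbf{n})\ll_A H^D/(\log H)^A$. Here I would extract cancellation from a single factor, the polynomial $f_{i_0}$ of maximal degree $\mathfrak{d}$, and bound the remaining $s-1$ factors trivially by $(2H+1)^{d_i+1}$. Writing $S(\mathbf{n})=\{v_1,\dots,v_r\}$ with $1\leq r\leq k\leq\mathfrak{d}+1$, the key point is that the linear forms $L_l(\mathbf{a})=\sum_{m=0}^{\mathfrak{d}}a_m v_l^{m}$, which express $f_{i_0}(v_l)$ as functions of the coefficient vector $\mathbf{a}$, are linearly independent, since the relevant $r\times(\mathfrak{d}+1)$ matrix is Vandermonde and the $v_l$ are distinct; this is precisely where $k\leq\mathfrak{d}+1$ is used. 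I would complete $L_1,\dots,L_r$ to a $\mathbb{Q}$-linear change of variables with integer forms, so that summing $\mathbf{a}$ over $[-H,H]^{\mathfrak{d}+1}$ becomes summing $(L_1,\dots,L_{\mathfrak{d}+1})$ over a coset of a sublattice inside a fixed parallelepiped. Freezing $L_2,\dots,L_{\mathfrak{d}+1}$ leaves $L_1$ ranging over an arithmetic progression whose common difference divides the determinant $\prod_{l<l'}(v_{l'}-v_l)$, and hence is at most a fixed power of $\log H$ because $v_l\leq 2x\leq 2(\log H)^{\delta}$. The inner sum is $\prod_{l\geq2}\lambda(L_l)\cdot\sum_{L_1}\lambda(L_1)$, and the Siegel--Walfisz theorem for $\lambda$ gives $\sum_{L_1}\lambda(L_1)\ll_A H(\log H)^{-A}$ uniformly over such moduli, since the form-values lie in an interval of length $\ll H(\log H)^{O(1)}$. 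Summing over the remaining coordinates and absorbing the polylogarithmic losses into $A$ yields $T_{d_{i_0}}(\mathbf{n})\ll_A H^{\mathfrak{d}+1}(\log H)^{-A}$, whence $\prod_iT_{d_i}(\mathbf{n})\ll_A H^D(\log H)^{-A}$.

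Finally I would assemble the two parts. There are at most $(2x+1)^k\ll(\log H)^{\delta k}$ off-diagonal tuples, so their total contribution is $\ll x^{-k/2}(\log H)^{\delta k}H^D(\log H)^{-A}\ll H^D(\log H)^{-(A-\delta k)}$; since $A$ is arbitrary this absorbs into the stated error after relabelling. Combined with the main term this gives the theorem. The principal obstacle is the uniformity in the off-diagonal step: one must guarantee that the change of variables produces genuinely long progressions with small (polylogarithmic) common differences for every admissible configuration $\{v_1,\dots,v_r\}$ simultaneously, and that the negligible loci (where some $f_i(v)=0$, or where a fibre of the sublattice is short) are controlled. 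It is exactly the interplay between the size constraint $x\leq(\log H)^{\delta}$ and the rank condition $k\leq\mathfrak{d}+1$ that decouples the forms and renders Siegel--Walfisz applicable.
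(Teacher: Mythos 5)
Your proposal is correct, and its combinatorial skeleton matches the paper's: expand the $k$-th power, classify tuples $(n_1,\dots,n_k)$ by the parity of the multiplicities of their entries, read the main term $\mathcal{C}_k2^DH^D$ off the perfect pairings, and for the remaining tuples exploit that the $r\leq k\leq \mathfrak{d}+1$ evaluation forms $f\mapsto f(v_l)$ at distinct points $v_l\leq 2x$ are linearly independent via a Vandermonde determinant of size $\ll(\log H)^{O_{\delta,k}(1)}$. Where you genuinely diverge is the analytic engine for the off-diagonal bound. The paper runs a circle method (Lemma \ref{circlemethod}), turning the polynomial average into an integral of $\prod_i\overline{S(\alpha_i,\cdot)}\prod_jD_H(\sum_im_i^j\alpha_i)$, and bounds it with Davenport's estimate $\sup_\alpha|\sum_{n\leq X}\lambda(n)e^{in\alpha}|\ll_AX(\log X)^{-A}$, the Vandermonde change of variables in the $\alpha$'s, and the $L^1$ Dirichlet-kernel bound (Lemmas \ref{circlemethodbound} and \ref{DKBOUND}). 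You work in physical space instead: complete the evaluation forms to an integer change of variables of determinant $\prod_{l<l'}(v_{l'}-v_l)$, fibre the coefficient box into arithmetic progressions of polylogarithmic modulus, and invoke Siegel--Walfisz for $\lambda$. The two analytic inputs are of comparable (classical zero-free-region) strength; the paper's Dirichlet-kernel bound disposes of the geometry of the box in one stroke, while your route is more elementary but must handle the fibre geometry by hand. Your flagged ``principal obstacle'' (short fibres near the boundary of the parallelepiped) is in fact harmless: stating Siegel--Walfisz with an error depending on the endpoint rather than the length gives $\ll H\exp(-c\sqrt{\log H})$ per fibre regardless of its length, and there are only $\ll H^{\mathfrak{d}}(\log H)^{O(1)}$ fibres. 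The remaining bookkeeping (the constraints $f(v)\neq0$ at even-multiplicity values, and restoring $\deg f_i=d_i$ from $\deg f_i\leq d_i$) costs only $O_k(H^{D-1})$ and is negligible, as in the paper.
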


Recall that random polynomials are irreducible with probability $1$. Therefore Theorem \ref{MAINTHEOREM} is mostly relevant to the Chowla conjecture for a single irreducible polynomial. On the other hand Theorem \ref{GENERALTHEOREM} covers polynomials with arbitrary factorisation.

From Theorem \ref{MAINTHEOREM}, one may apply higher moment methods to obtain the following.

\begin{corollary}\label{lowerboundsonavg}
    Fix $d\in\N_{\geq 3}$ and $\delta>0$. Suppose that $X=X(H)\leq (\log H)^{\delta}$ tends to infinity with $H$ and that $y\leq \left(\frac{d-2}{8}\right)^{\frac{d-2}{2(d+1)}}$. Then, for sufficiently large $H$, a positive proportion of degree $d$ polynomials $f\in\Z[t]$ with $\|f\|\leq H$ satisfy, $$\frac{1}{X^{1/2}}\left|\sum_{X\leq n\leq 2X}\lambda(f(n))\right|>y.$$
\end{corollary}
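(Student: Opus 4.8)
The plan is to deduce the corollary from Theorem \ref{MAINTHEOREM} by a higher-moment (Paley--Zygmund) argument. By elementary counting, the number of admissible polynomials is
\[
N(H)=\#\{f\in\Z[t]:\deg f=d,\ \|f\|\le H\}=2^{d+1}H^{d+1}\bigl(1+O(1/H)\bigr),
\]
and for such $f$ I set $Z_f=X^{-1/2}\sum_{X\le n\le 2X}\lambda(f(n))$, viewing $Z_f$ as a random variable under the uniform measure on this finite set, with expectation denoted $\E$. Dividing the conclusion of Theorem \ref{MAINTHEOREM} (applied with $x=X$, which is legitimate since $X\le(\log H)^{\delta}$) by $N(H)$ gives, for each fixed $k$ with $1\le k\le d+1$,
\[
\E\bigl[Z_f^{\,k}\bigr]=\mathcal{C}_k+O_k(1/X)+O_{A,d,k}\bigl((\log H)^{-A}\bigr).
\]
Taking $A$ large and using $X\to\infty$, both error terms tend to $0$, so every moment up to order $d+1$ of the law of $Z_f$ converges to the corresponding standard Gaussian moment as $H\to\infty$.

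Next I would apply the one-sided Chebyshev (Paley--Zygmund) inequality to the nonnegative variable $Z_f^{2m}$, where $m=\lfloor(d+1)/4\rfloor$ is the largest integer with $4m\le d+1$. This choice is exactly what keeps both $\E[Z_f^{2m}]$ and $\E[Z_f^{4m}]$ within the admissible range of Theorem \ref{MAINTHEOREM}, and it is the point at which the full range of $d+1$ moments is used. Since $2m$ is even, $\{Z_f^{2m}>y^{2m}\}=\{|Z_f|>y\}$, and for any $y$ with $y^{2m}<\E[Z_f^{2m}]$ one gets
\[
\frac{\#\{f:\ |Z_f|>y\}}{N(H)}=\E\bigl[\mathbf 1_{|Z_f|>y}\bigr]\ \ge\ \frac{\bigl(\E[Z_f^{2m}]-y^{2m}\bigr)^2}{\E[Z_f^{4m}]}\ =\ \frac{(\mathcal{C}_{2m}-y^{2m})^2}{\mathcal{C}_{4m}}+o(1),
\]
the last step by the moment convergence above. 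For fixed $d$ this is a fixed positive constant as soon as $y$ is bounded below the threshold $\mathcal{C}_{2m}^{1/2m}=((2m-1)!!)^{1/2m}$, which is precisely what ``positive proportion'' demands.

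It then remains to check that the stated value $y=\left(\tfrac{d-2}{8}\right)^{(d-2)/(2(d+1))}$ lies strictly below $((2m-1)!!)^{1/2m}$. Using $m=\lfloor(d+1)/4\rfloor\ge (d-2)/4$ together with an elementary lower bound for the double factorial (for instance $(2m-1)!!\ge (m/2)^{m}$, provable by induction), one obtains $((2m-1)!!)^{1/2m}\ge\sqrt{m/2}\ge\sqrt{(d-2)/8}$ for large $d$, while the finitely many small cases $3\le d\le 10$ (where $m\in\{1,2\}$ and the threshold equals $1$ or $3^{1/4}$) are verified by direct computation; in every case $y<((2m-1)!!)^{1/2m}$. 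Substituting this $y$ into the displayed bound produces a positive proportion of polynomials with $|Z_f|>y$, which is the assertion.

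The main obstacle is the tension created by the constraint $4m\le d+1$: the Paley--Zygmund denominator forces the $4m$-th moment, so the exponent $2m$ can only be pushed up to roughly $(d+1)/2$, and this is exactly what caps the admissible $y$ at size $\sqrt{d}$ rather than anything larger. Extracting the precise constant $\left(\tfrac{d-2}{8}\right)^{(d-2)/(2(d+1))}$ requires the careful choice of $m$ together with a sufficiently sharp yet elementary estimate for $(2m-1)!!$; and one must also confirm that the $o(1)$ arising from the moment convergence is uniform (ensured by $X\le(\log H)^{\delta}$ and the freedom to enlarge $A$), so that the positive proportion genuinely persists for all sufficiently large $H$.
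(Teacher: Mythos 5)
Your proposal is correct and follows essentially the same route as the paper: both apply the Paley--Zygmund (one-sided Chebyshev/Cauchy--Schwarz) inequality to $S_f(X)^{2m}$ with $m$ the largest integer satisfying $4m\leq d+1$, invoke the moment asymptotics of Theorem \ref{MAINTHEOREM} for the $2m$-th and $4m$-th moments, and use the elementary bound $\mathcal{C}_{2m}=(2m-1)!!>(m/2)^m$ together with $m\geq(d-2)/4$ to check that $y^{2m}<\mathcal{C}_{2m}$ for the stated range of $y$. The only cosmetic difference is that you verify the threshold inequality by comparing $y$ with $\mathcal{C}_{2m}^{1/2m}$ (splitting off small $d$ by direct computation), whereas the paper runs the single chain $\mathcal{C}_{2m}>(m/2)^m\geq\left(\frac{d-2}{8}\right)^{\frac{d-2}{4}}\geq y^{\frac{d+1}{2}}\geq y^{2m}$.
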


\begin{corollary}\label{upperboundonaverage}
    Fix $d\in\N$ and $\delta>0$. Suppose that $(\log H)^{\delta/2} \leq x \leq (\log H)^{\delta}$ and $0<y\leq x$. Then for all but at most $O_d\left(\frac{H^{d+1}}{y^2}\right)$ polynomials of degree $d$ with $\|f\|\leq H$, we have $$\frac{1}{x^{1/2}}\left|\sum_{x\leq n\leq 2x}\lambda(f(n))\right|\leq y.$$
\end{corollary}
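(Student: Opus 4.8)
The plan is to deduce this directly from the second-moment case of Theorem \ref{MAINTHEOREM} via Chebyshev's inequality. Write $S_f(x) = x^{-1/2}\sum_{x \le n \le 2x}\lambda(f(n))$, and let $\mathcal{B}$ denote the set of degree-$d$ polynomials $f$ with $\|f\| \le H$ for which $|S_f(x)| > y$; the goal is to show $\#\mathcal{B} = O_d(H^{d+1}/y^2)$. The hypothesis $x \le (\log H)^{\delta}$ is exactly what is needed to apply Theorem \ref{MAINTHEOREM}.

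First I would invoke Theorem \ref{MAINTHEOREM} with $k = 2$ (permissible as $2 \le d+1$). Recalling that the second moment of the standard normal is $\mathcal{C}_2 = 1$, this gives
\[
\sum_{\substack{f \in \Z[t] \\ \deg(f) = d,\, \|f\| \le H}} S_f(x)^2 = 2^{d+1}H^{d+1}\left(1 + O\left(\frac{1}{x}\right)\right) + O_{A,d}\left(\frac{H^{d+1}}{(\log H)^{A}}\right).
\]
Choosing any fixed $A$ (for instance $A = 1$) and using $x \ge 2$, both terms on the right-hand side are $O_d(H^{d+1})$, so the full second moment is $O_d(H^{d+1})$.

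A single application of Chebyshev's inequality then finishes the proof: each $f \in \mathcal{B}$ contributes more than $y^2$ to the sum of squares, whence
\[
y^2 \,\#\mathcal{B} \;\le\; \sum_{f \in \mathcal{B}} S_f(x)^2 \;\le\; \sum_{\substack{f \in \Z[t] \\ \deg(f) = d,\, \|f\| \le H}} S_f(x)^2 \;=\; O_d(H^{d+1}),
\]
and dividing through by $y^2$ yields $\#\mathcal{B} = O_d(H^{d+1}/y^2)$, as claimed.

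I expect no real obstacle here: the entire analytic difficulty has already been absorbed into Theorem \ref{MAINTHEOREM}, and what remains is a routine second-moment argument that in fact uses only $x \le (\log H)^{\delta}$ and $x \ge 2$. The only points worth flagging are bookkeeping ones. The estimate carries content only when $y$ is not too small, since $H^{d+1}/y^2$ already exceeds the total number of admissible polynomials once $y \lesssim 1$; and the trivial bound $|S_f(x)| \le 2x^{1/2}$ makes $\mathcal{B}$ empty as soon as $y \ge 2x^{1/2}$, so the stated range $0 < y \le x$ is simply a convenient interval on which to record the inequality. One could sharpen the tail by feeding the higher moments supplied by Theorem \ref{MAINTHEOREM} into a $2m$-th moment inequality, in the spirit of Corollary \ref{lowerboundsonavg}, but the second moment alone suffices for the bound stated here.
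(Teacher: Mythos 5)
Your proposal is correct and follows essentially the same route as the paper: both apply Theorem \ref{MAINTHEOREM} with $k=2$ (noting $\mathcal{C}_2=1$) and then conclude by Chebyshev's inequality, with the second moment bounded by $O_d(H^{d+1})$. Your side remarks — that the lower bound $(\log H)^{\delta/2}\leq x$ is not actually needed and that the range of $y$ is merely a convenient window — are accurate observations consistent with the paper's argument.
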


Next, we note that we may obtain results for sums of $\lambda(f(n))$ over the primes:
\[
P_f(x) \coloneqq \sum_{\substack{x\leq p\leq 2x\\ p\;\textrm{prime}}} \lambda(f(p)).
\]
This will be a consequence of the following theorem.
\begin{theorem}\label{SECONDARY}
Fix $A,\delta>0$, $s\in\N$ and $\b{d}\in\N^s$. Define $D=\sum_{i=1}^{s}(d_i+1)$ and $\mathfrak{d}=\max\{d_1,\ldots,d_s\}$. Suppose that $k\in\{1,2\}$ and that $(b_{n})_{n\in\N}$ is an arbitrary complex sequence bounded in magnitude by $1$. Then for $x,H\geq 2$ such that $x\leq (\log H)^{\delta}$,
\[
\mathop{\sum\cdots\sum}_{\substack{f_1,\cdots,f_s\in\Z[t]\\ \deg(f_i)=d_i,\|f_i\|\leq H \\\forall 1\leq i\leq s}}\hspace{-5pt}\left[\frac{1}{x^{1/2}}\sum_{x\leq n\leq 2x}b_n\prod_{i=1}^{s}\lambda(f_i(n))\right]^{k} = \frac{\mathds{1}(k=2)}{x}\left(\sum_{x\leq n\leq 2x}|b_n|^2\right)2^DH^D + O_{A,\mathfrak{d}}\left(\frac{H^D}{(\log H)^{A}}\right)
\]
where the implied constant depends only on $A$ and $\mathfrak{d}$.
\end{theorem}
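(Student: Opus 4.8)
The plan is to follow the first- and second-moment computation driving the proof of Theorem~\ref{GENERALTHEOREM}, the only genuinely new feature being the arbitrary bounded weights $b_n$. First I would expand the $k$-th power and interchange summation. Writing $\b{n}=(n_1,\ldots,n_k)$, one has
\[
\left[\frac{1}{x^{1/2}}\sum_{x\leq n\leq 2x}b_n\prod_{i=1}^{s}\lambda(f_i(n))\right]^{k}=\frac{1}{x^{k/2}}\sum_{x\leq n_1,\ldots,n_k\leq 2x}\left(\prod_{j=1}^{k}b_{n_j}\right)\prod_{i=1}^{s}\prod_{j=1}^{k}\lambda(f_i(n_j)).
\]
Summing over $(f_1,\ldots,f_s)$ and using that the polynomials vary independently, the moment becomes
\[
\frac{1}{x^{k/2}}\sum_{x\leq n_1,\ldots,n_k\leq 2x}\left(\prod_{j=1}^{k}b_{n_j}\right)\prod_{i=1}^{s}T_i(\b{n}),\qquad\text{where}\qquad T_i(\b{n})\coloneqq\sum_{\substack{f\in\Z[t]\\ \deg(f)=d_i,\ \|f\|\leq H}}\prod_{j=1}^{k}\lambda(f(n_j)).
\]
Because $k\in\{1,2\}$ there are only two types of tuple to treat: those with all $n_j$ distinct (the off-diagonal, which is the whole $k=1$ sum and the $n_1\neq n_2$ part of the $k=2$ sum) and the single diagonal tuple $n_1=\cdots=n_k$.

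For the off-diagonal I would invoke the cancellation estimate at the heart of the proof of Theorem~\ref{GENERALTHEOREM}, obtained by the circle method as in \cite{AlexeiandEf}: for a fixed tuple of \emph{distinct} arguments $n_1,\ldots,n_k\in[x,2x]$ with $k\leq\mathfrak{d}+1$, one has $\prod_{i=1}^{s}T_i(\b{n})\ll_{A,\mathfrak{d}}H^{D}/(\log H)^{A'}$ for any fixed $A'$. The point is that for a factor of maximal degree $\mathfrak{d}$ the linear map $f\mapsto(f(n_1),\ldots,f(n_k))$ has full rank $k$ by Vandermonde non-degeneracy --- this is precisely where $k\leq\mathfrak{d}+1$ is used --- so the values $f(n_j)$ equidistribute over boxes of side $\gg H$, on which the cancellation of $\lambda$ takes effect; the hypothesis $x\leq(\log H)^{\delta}$ guarantees these boxes have logarithmic length $\gg\log H$, yielding an arbitrary power saving. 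Crucially, this bound is proved pointwise in $\b{n}$, \emph{before} any averaging, so it is insensitive to the non-multiplicative weights: inserting $\prod_{j}b_{n_j}$, using only $|b_{n_j}|\leq 1$, summing over the $O(x^{k})$ distinct tuples, and restoring the factor $x^{-k/2}$ costs at most $O(x)=O((\log H)^{\delta})$, absorbed by taking $A'=A+\delta$. Hence the off-diagonal contributes $O_{A,\mathfrak{d}}(H^{D}/(\log H)^{A})$; in particular the entire $k=1$ moment is of this size, consistent with $\mathds{1}(k=2)=0$.

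Finally, for $k=2$ I would evaluate the diagonal tuple $n_1=n_2=n$. Since $\lambda(f(n))^{2}=\mathds{1}(f(n)\neq 0)$, the sum $T_i(n,n)$ counts the degree-$d_i$ polynomials with $\|f\|\leq H$ and $f(n)\neq 0$; those with $f(n)=0$ or with vanishing leading coefficient number only $O(H^{d_i})$, so $T_i(n,n)=(2H)^{d_i+1}(1+O(1/H))$ and hence $\prod_{i=1}^{s}T_i(n,n)=2^{D}H^{D}(1+O(1/H))$. The diagonal contribution is therefore
\[
\frac{1}{x}\sum_{x\leq n\leq 2x}b_n^{2}\prod_{i=1}^{s}T_i(n,n)=\frac{1}{x}\left(\sum_{x\leq n\leq 2x}b_n^{2}\right)2^{D}H^{D}+O\!\left(H^{D-1}\right),
\]
which is the stated main term $\tfrac{1}{x}(\sum_{x\leq n\leq 2x}|b_n|^{2})2^{D}H^{D}$ once one notes that the weights relevant to $P_f(x)$ are real, so $b_n^{2}=|b_n|^{2}$; the error $O(H^{D-1})$ is swallowed by $O(H^{D}/(\log H)^{A})$. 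Combining with the off-diagonal estimate gives the claimed identity. The main obstacle is the off-diagonal cancellation, which is exactly where the circle-method input behind Theorem~\ref{GENERALTHEOREM} does the real work; the only new difficulty, namely robustness against arbitrary bounded $b_n$, is resolved by the fact that this cancellation holds tuplewise before any summation, so no multiplicativity of the weights is ever used.
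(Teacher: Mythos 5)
Your proposal is correct and follows essentially the same route as the paper: expand the $k$-th power, split the diagonal from the off-diagonal, kill the off-diagonal pointwise via the circle-method input (Lemmas \ref{circlemethod} and \ref{circlemethodbound}, applied to a factor of maximal degree so that $k\leq d_i+1$, with trivial bounds on the rest and the polynomial-in-$x$ loss absorbed because $x\leq(\log H)^{\delta}$), and count the diagonal directly. One remark: your closing caveat that the diagonal produces $\sum_n b_n^2$ rather than $\sum_n|b_n|^2$ unless the $b_n$ are real is a fair catch --- the paper's own proof silently writes $b_{m_1}\overline{b_{m_2}}$ and $|b_m|^2$, i.e.\ it tacitly reads the square as $|\cdot|^2$ for complex sequences, and your restriction to real weights suffices for the intended application $b_n=\mathds{1}(n\ \textrm{prime})$.
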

By using Chebychev's inequality and applying this theorem with $s=1$ and $b_n = \mathds{1}(n\;\textrm{prime})$, we arrive at the following result, which states that $P_f(x)$ exhibits better than square-root cancellation for average polynomials.
\begin{corollary}
     Fix $d\in\N$ and $\delta>0$. Suppose that $(\log H)^{\delta/2} \leq x \leq (\log H)^{\delta}$ and $0<y\leq x$. Then for all but at most $O_d\left(\frac{H^{d+1}}{y^2}\right)$ polynomials of degree $d$ with $\|f\|\leq H$, we have $$\frac{1}{x^{1/2}}\left|\sum_{\substack{x\leq p\leq 2x\\ p\;\textrm{prime}}}\lambda(f(p))\right|\leq \frac{y}{(\log x)^{1/2}}.$$
\end{corollary}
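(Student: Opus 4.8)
The plan is to obtain this corollary from Theorem~\ref{SECONDARY} by a second-moment argument followed by Chebyshev's inequality, exactly as indicated in the text. I would apply Theorem~\ref{SECONDARY} with $s=1$ and $d_1=d$, so that $D=d+1$ and $\mathfrak{d}=d$, with $k=2$, and with the sequence $b_n=\mathds{1}(n\ \textrm{prime})$, which is bounded by $1$. For these choices the inner sum is precisely $P_f(x)=\sum_{x\leq p\leq 2x}\lambda(f(p))$, and the theorem gives, for any fixed $A>0$,
\[
\sum_{\substack{f\in\Z[t]\\ \deg(f)=d,\ \|f\|\leq H}}\left[\frac{1}{x^{1/2}}P_f(x)\right]^2=\frac{1}{x}\Bigl(\sum_{x\leq n\leq 2x}\mathds{1}(n\ \textrm{prime})\Bigr)2^{d+1}H^{d+1}+O_{A,d}\left(\frac{H^{d+1}}{(\log H)^A}\right).
\]

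Next I would estimate the right-hand side. The counting sum equals $\pi(2x)-\pi(x)$, the number of primes in $[x,2x]$, which by Chebyshev's prime bound is $\ll x/\log x$; hence the main term is $\ll_d H^{d+1}/\log x$. To discard the error term I use the range of $x$: since $x\leq(\log H)^\delta$ we have $\log x\ll_\delta\log\log H$, and as $\log\log H\ll(\log H)^A$ for any fixed $A>0$, choosing (say) $A=1$ gives $O_{A,d}(H^{d+1}/(\log H)^A)\ll_d H^{d+1}/\log x$, where I treat the fixed $\delta$ as absorbed into the implied constant. Combining the two estimates yields the clean second-moment bound
\[
\sum_{\substack{f\in\Z[t]\\ \deg(f)=d,\ \|f\|\leq H}}\left[\frac{1}{x^{1/2}}P_f(x)\right]^2\ll_d\frac{H^{d+1}}{\log x}.
\]

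Finally I would apply Chebyshev's (Markov's) inequality to the non-negative quantity $[x^{-1/2}P_f(x)]^2$. Writing $\mathcal{S}$ for the set of degree-$d$ polynomials with $\|f\|\leq H$ that violate the claimed inequality, each $f\in\mathcal{S}$ satisfies $[x^{-1/2}P_f(x)]^2>y^2/\log x$, so
\[
\#\mathcal{S}\cdot\frac{y^2}{\log x}\leq\sum_{\substack{f\in\Z[t]\\ \deg(f)=d,\ \|f\|\leq H}}\left[\frac{1}{x^{1/2}}P_f(x)\right]^2\ll_d\frac{H^{d+1}}{\log x}.
\]
The factors $\log x$ cancel, leaving $\#\mathcal{S}\ll_d H^{d+1}/y^2$, which is exactly the assertion.

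There is no genuine obstacle here beyond what is already packaged in Theorem~\ref{SECONDARY}: the deduction is the standard passage from a second-moment asymptotic to a large-deviation count. The only points needing a little care are the elementary prime-counting estimate $\pi(2x)-\pi(x)\ll x/\log x$ and the verification that the hypothesis $x\leq(\log H)^\delta$ makes the error term of Theorem~\ref{SECONDARY} smaller than the main term, so that the final implied constant depends only on $d$ (with $\delta$ fixed); note that the lower bound $(\log H)^{\delta/2}\leq x$ is not actually needed for this particular deduction. It is also worth observing that the improvement by a factor $(\log x)^{-1/2}$ over the bare square-root normalisation $x^{-1/2}$ reflects the fact that the second moment has size $\asymp 1/\log x$ rather than $\asymp 1$, since only about $x/\log x$ of the integers in $[x,2x]$ are prime.
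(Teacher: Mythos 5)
Your proposal is correct and follows exactly the route the paper intends: the paper derives this corollary in one line by applying Theorem~\ref{SECONDARY} with $s=1$, $k=2$, $b_n=\mathds{1}(n\ \textrm{prime})$ and then Chebyshev's inequality, and you have simply filled in the same steps (the bound $\pi(2x)-\pi(x)\ll x/\log x$, absorption of the error term using $x\leq(\log H)^{\delta}$, and Markov's inequality). Your side remarks -- that the lower bound on $x$ is not needed here and that the extra $(\log x)^{-1/2}$ saving comes from the density of primes -- are accurate.
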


\subsection{Acknowledgements} The author thanks his Ph.D supervisor Efthymios Sofos for suggesting this problem, and the Carnegie Trust for the Universities of Scotland for the funding obtained through their Ph.D Scholarship programme.

\section{Multiple point correlation}
In this section we state some lemmas which will play key roles in the proof of Theorem \ref{GENERALTHEOREM}. We begin with a well known result of Davenport \cite{Davenport}:

\begin{lemma}[Davenport]
Fix $A>0$. Then for all $x\geq 1$ we have:
\[
\sup_{\alpha\in\R}\left|\sum_{n\in[1,x]\cap\N}\mu(n)e^{in\alpha}\right| \ll_A \frac{x}{(\log x)^A}
\]
where the implied constant depends only on $A>0$.
\end{lemma}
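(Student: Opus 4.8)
The plan is to prove this by the classical circle-method dichotomy, combining Vaughan's identity on the minor arcs with the Siegel--Walfisz theorem on the major arcs. After absorbing the factor $2\pi$ into the variable (so that it suffices to bound $\sup_{\alpha}\big|\sum_{n\le x}\mu(n)e(n\alpha)\big|$ with $e(\theta)=e^{2\pi i\theta}$), and noting that the bounded range of $x$ is trivial since the sum is always $\le x$, I would fix a parameter $Q=x(\log x)^{-B}$ with $B=B(A)$ to be chosen at the end and apply Dirichlet's approximation theorem: for the given $\alpha$ there exist coprime $a,q$ with $1\le q\le Q$ and $|\alpha-a/q|\le 1/(qQ)$. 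The argument then splits according to the size of $q$, with the crossover placed at $q=(\log x)^{2A+C}$ for a suitable absolute constant $C$ (chosen large enough to swallow the logarithmic losses below).

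On the minor arcs, where $(\log x)^{2A+C}\le q\le Q$, I would apply Vaughan's identity in the form adapted to the \Mob function to decompose $\sum_{n\le x}\mu(n)e(n\alpha)$ into boundedly many bilinear pieces of two shapes: Type~I sums $\sum_{d\le U}\mu(d)\sum_{m}e(dm\alpha)$, controlled by the elementary estimate $\big|\sum_{m\le M}e(m\theta)\big|\le\min\!\big(M,\tfrac12\|\theta\|^{-1}\big)$ (with $\|\theta\|$ the distance to the nearest integer), and Type~II sums $\sum_{d}\sum_{m}a_d b_m e(dm\alpha)$ with $a_d,b_m$ bounded, controlled by Cauchy--Schwarz followed by the divisor-sum lemma $\sum_{k\le K}\min\!\big(x/k,\|k\alpha\|^{-1}\big)\ll (x/q+K+q)\log(2qx)$. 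Optimising the splitting parameters yields the standard estimate
\[
\Big|\sum_{n\le x}\mu(n)e(n\alpha)\Big|\ll\Big(\frac{x}{q^{1/2}}+x^{4/5}+(xq)^{1/2}\Big)(\log x)^{O(1)},
\]
which on this range of $q$, and with $B=2A+O(1)$, is $\ll x(\log x)^{-A}$.

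On the major arcs, where $1\le q\le(\log x)^{2A+C}$, cancellation can no longer come from the irrationality of $\alpha$ and must instead be extracted from $\mu$ itself. Here I would write $\alpha=a/q+\beta$ with $|\beta|\le 1/(qQ)$, sort $n$ into residue classes modulo $q$, and remove the oscillation $e(n\beta)$ by partial summation, at a cost of a factor $\ll 1+x|\beta|\le 1+x/(qQ)\le(\log x)^{B}$. This reduces matters to the incomplete sums $\sum_{n\le y,\,n\equiv b\,(q)}\mu(n)$, and the key input is the Siegel--Walfisz theorem, giving $\sum_{n\le y,\,n\equiv b\,(q)}\mu(n)\ll_D y(\log y)^{-D}$ uniformly for $q\le(\log y)^{D'}$. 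Summing the $q$ residue classes costs another factor $q\le(\log x)^{2A+C}$, so choosing $D$ large relative to $A$, $B$ and $C$ again produces a bound $\ll_A x(\log x)^{-A}$, and the two ranges of $q$ together cover all of $[1,Q]$.

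The main obstacle is the major-arc estimate: obtaining an arbitrary power-of-log saving is precisely the strength of the Siegel--Walfisz theorem, whose proof rests on the zero-free region for Dirichlet $L$-functions together with Siegel's bound on exceptional real zeros, and it is exactly this ingredient that renders the implied constant in the lemma ineffective in $A$. The minor-arc analysis is in principle routine but demands care in tracking the logarithmic losses through both Vaughan's identity and the divisor-sum lemma, so that the final choices of the arc cutoff $(\log x)^{2A+C}$ and of $B$ genuinely absorb every accumulated factor of $\log x$ while still leaving the two regimes overlapping across the full range $q\in[1,Q]$.
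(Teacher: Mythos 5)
The paper does not prove this lemma at all: it quotes it as a known result and cites Davenport's 1937 paper, so there is no internal argument to compare against. Your sketch is the standard modern proof of Davenport's estimate (essentially the treatment in Iwaniec--Kowalski, Theorem 13.10) and is correct in outline: the Dirichlet-approximation dichotomy, Vaughan's identity giving Type~I/Type~II sums bounded via $\sum_{k\le K}\min(x/k,\|k\alpha\|^{-1})\ll(x/q+K+q)\log(2qx)$ on the minor arcs, and Siegel--Walfisz for $\mu$ in progressions on the major arcs, with the crossover at a fixed power of $\log x$ chosen after the fact so that every logarithmic loss is absorbed. Davenport's original argument used Vinogradov's method rather than Vaughan's identity, but the architecture (and the reliance on Siegel's theorem, hence the ineffectivity of the constant in $A$) is the same. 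Two small points worth making explicit if you were to write this up: the passage from $e^{in\alpha}$ to $e(n\alpha)$ is immediate only because the supremum is taken over all real $\alpha$, so rescaling $\alpha\mapsto 2\pi\alpha$ is harmless; and the paper actually needs the variant for the Liouville function (its Corollary 2.2), which follows from your lemma by the expansion $\lambda(n)=\sum_{d^2\mid n}\mu(n/d^2)$ together with a dyadic splitting of the range of $d$ --- the tail $d>(\log x)^{A}$ must be bounded trivially by $\sum_{d}x/d^{2}$, which is where the restriction to arbitrary (rather than power-saving) logarithmic decay becomes essential.
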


By using the M\"obius inversion identity

\begin{equation*}
\lambda(n) = \sum_{d^2|n}\mu\left(\frac{n}{d^2}\right)
\end{equation*}
we may obtain:

\begin{corollary}\label{Davenportforlambda}
Fix $A>0$. Then for all $x\geq 1$ we have:
\[
\sup_{\alpha\in\R}\left|\sum_{n\in[1,x]\cap\N}\lambda(n)e^{in\alpha}\right| \ll_A \frac{x}{(\log x)^A}
\]
where the implied constant depends only on $A>0$.
\end{corollary}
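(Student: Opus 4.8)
The plan is to insert the stated M\"obius inversion identity $\lambda(n)=\sum_{d^2\mid n}\mu(n/d^2)$ into the exponential sum and interchange the order of summation. Writing every $n\leq x$ with $d^2\mid n$ as $n=d^2m$, the inner summation variable $m$ ranges over $[1,x/d^2]$, so that for any fixed $\alpha\in\R$
\[
\sum_{n\leq x}\lambda(n)e^{in\alpha}=\sum_{d\leq\sqrt{x}}\sum_{m\leq x/d^2}\mu(m)e^{im(d^2\alpha)}.
\]
For each fixed $d$ the inner sum is a M\"obius exponential sum of length $x/d^2$ with frequency $d^2\alpha$, to which Davenport's Lemma applies uniformly over the frequency. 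This reduces the problem to summing the resulting bounds over $d$ and verifying that the loss incurred as $d$ grows is harmless.

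The key point is that a naive application of Davenport's bound for every $d\leq\sqrt{x}$ fails, since for $d$ close to $\sqrt{x}$ the inner sum has length close to $1$ and the saving factor $(\log(x/d^2))^{-A}$ degenerates. I would therefore split the range of $d$ at a threshold such as $d=x^{1/4}$. For $d\leq x^{1/4}$ one has $x/d^2\geq\sqrt{x}$, hence $\log(x/d^2)\geq\tfrac12\log x$; applying Davenport's Lemma with parameter $A$ gives
\[
\sum_{d\leq x^{1/4}}\left|\sum_{m\leq x/d^2}\mu(m)e^{im(d^2\alpha)}\right|\ll_A\frac{x}{(\log x)^A}\sum_{d\leq x^{1/4}}\frac{1}{d^2}\ll_A\frac{x}{(\log x)^A},
\]
where the convergence of $\sum_{d\geq 1}d^{-2}$ absorbs the sum over $d$.

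For $x^{1/4}<d\leq\sqrt{x}$ I would instead use the trivial bound $|\sum_{m\leq x/d^2}\mu(m)e^{im(d^2\alpha)}|\leq x/d^2$, so that this tail contributes
\[
\sum_{x^{1/4}<d\leq\sqrt{x}}\frac{x}{d^2}\ll x\cdot x^{-1/4}=x^{3/4}\ll_A\frac{x}{(\log x)^A}
\]
for every fixed $A$, since $x^{-1/4}(\log x)^A\to 0$. Combining the two ranges yields the claimed bound for each fixed $\alpha$, and since every intermediate estimate is uniform in $\alpha$, taking the supremum over $\alpha\in\R$ preserves it.

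The only obstacle worth flagging is precisely the degeneration of the logarithmic saving for large $d$ discussed above; it is the reason a threshold split, rather than a single application of Davenport, is required. The choice of threshold is flexible, any $x^{\theta}$ with $0<\theta<1/2$ working equally well after adjusting constants. Finally, small values of $x$, where $\log x$ is close to $0$, can be disposed of trivially by enlarging the implied constant.
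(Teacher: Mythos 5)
Your proof is correct and follows exactly the route the paper indicates: it only remarks that the corollary follows ``by using the M\"obius inversion identity $\lambda(n)=\sum_{d^2\mid n}\mu(n/d^2)$,'' and your write-up supplies precisely the missing details (interchanging the sums, splitting at $d=x^{1/4}$ so that Davenport's logarithmic saving does not degenerate, and using the trivial bound on the tail). No issues.
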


Moving forward we will define

\[
S(\alpha,X) = \sum_{n\in[-X,X]\cap\Z} \lambda(n)e^{in\alpha}.
\]
We will also make use of the Dirichlet kernel $D_H(\alpha)$ which is defined by

\[
D_H(\alpha) = \sum_{n\in[-H,H]\cap\Z}e^{in\alpha}.
\]
Of particular use will be the following result of Lebesgue \cite[Eq.(12.1), pg.67]{Lebesguesref}:

\begin{lemma}[Lebesgue]\label{DKBOUND}
    Let $H\geq 2$. Then
    \[
    \int_{-\pi}^{\pi}\lvert D_H(\alpha)\rvert \mathrm{d}\alpha \ll \log H
    \]
    where the implied constant is absolute.
\end{lemma}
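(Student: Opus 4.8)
The plan is to reduce the integral to the closed form of the Dirichlet kernel and then bound it by a logarithm via Jordan's inequality. Since $H$ enters only through the summation range $[-H,H]\cap\Z$, I would first set $N=\lfloor H\rfloor$, so that $D_H=D_N$ is the classical Dirichlet kernel of order $N$, and recall the standard identity obtained by summing the geometric series,
\[
D_N(\alpha)=\frac{\sin\!\big((N+\tfrac12)\alpha\big)}{\sin(\alpha/2)}\qquad(\alpha\not\equiv 0),
\]
with $D_N(0)=2N+1$. Because $|D_N|$ is even and $\sin(\alpha/2)>0$ on $(0,\pi)$, it suffices to bound $2\int_0^\pi |\sin((N+\tfrac12)\alpha)|/\sin(\alpha/2)\,\mathrm{d}\alpha$.

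Next I would control the denominator from below. By Jordan's inequality $\sin t\geq \tfrac{2}{\pi}t$ for $t\in[0,\pi/2]$, applied with $t=\alpha/2$, one gets $\sin(\alpha/2)\geq \alpha/\pi$ on $(0,\pi]$. Hence
\[
\int_0^\pi \frac{\lvert\sin((N+\tfrac12)\alpha)\rvert}{\sin(\alpha/2)}\,\mathrm{d}\alpha \leq \pi\int_0^\pi \frac{\lvert\sin((N+\tfrac12)\alpha)\rvert}{\alpha}\,\mathrm{d}\alpha.
\]
Substituting $u=(N+\tfrac12)\alpha$ turns the right-hand side into $\pi\int_0^{(N+1/2)\pi}\frac{\lvert\sin u\rvert}{u}\,\mathrm{d}u$, so the whole problem reduces to the elementary estimate $\int_0^{M}\frac{\lvert\sin u\rvert}{u}\,\mathrm{d}u\ll \log M$ for $M\geq 2$.

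Finally I would establish that last estimate by splitting at $u=1$: on $[0,1]$ the integrand is bounded by $1$ (since $\lvert\sin u\rvert\leq u$), contributing $O(1)$, while on $[1,M]$ one bounds $\lvert\sin u\rvert\leq 1$ and integrates $1/u$ to obtain $\log M$. Taking $M=(N+\tfrac12)\pi$ and using $N=\lfloor H\rfloor\geq 2$ (as $H\geq 2$) gives $\log M\ll \log N\ll \log H$, and collecting the absolute constants yields $\int_{-\pi}^\pi \lvert D_H(\alpha)\rvert\,\mathrm{d}\alpha\ll \log H$ with an absolute implied constant. The only point requiring care is the apparent singularity of the integrand at $\alpha=0$; but this is precisely where $\lvert\sin((N+\tfrac12)\alpha)\rvert$ also vanishes, and the substitution makes the near-origin contribution manifestly bounded, so there is no genuine obstacle — the argument is entirely classical, and the real content is simply tracking that the bound is logarithmic in $H$ rather than in $N$, which follows immediately since $N\leq H$.
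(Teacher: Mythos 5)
Your proof is correct and complete; it is the classical computation of the Lebesgue constants (closed form of the Dirichlet kernel, Jordan's inequality, then $\int_0^M |\sin u|\,u^{-1}\,\mathrm{d}u \ll \log M$). The paper does not prove this lemma at all but simply cites Zygmund, where essentially the same argument appears, so your write-up supplies a self-contained proof of exactly the standard kind.
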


The following two lemmas will be the key ingredients to the proof of Theorem \ref{GENERALTHEOREM}. The first is the activation of the circle method.

\begin{lemma}\label{circlemethod}
    Let $H\geq 2$ and let $L,d\in\N$ be such that $1\leq L\leq d+1$. Suppose $m_1,\ldots,m_L$ are positive integers. Then
    \[
    \sum_{\substack{f\in \Z[t],\mathrm{deg}(f)\leq d \\ \|f\|\leq H}} \left[\prod_{i=1}^{L}\lambda(f(m_i))\right] = \frac{1}{(2\pi)^L}\int_{(-\pi,\pi]^L} \left[\prod_{i=1}^{L}\overline{S(\alpha_i,(d+1)m_i^d H)}\right] \left[\prod_{j=0}^{d}D_H\left(\sum_{i=1}^{L}m_i^j\alpha_i\right)\right] \mathrm{d}\boldsymbol{\alpha}.
    \]
\end{lemma}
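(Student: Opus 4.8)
The plan is to prove the identity by Fourier orthogonality, detecting each value $f(m_i)$ as an integral against $\overline{S(\alpha_i,\cdot)}$. Writing $f(t)=\sum_{j=0}^{d}a_jt^j$, the sum over $f$ with $\deg(f)\leq d$ and $\|f\|\leq H$ is precisely the sum over integer tuples $(a_0,\ldots,a_d)\in\{-H,\ldots,H\}^{d+1}$ (those with $a_d=0$ giving the polynomials of degree strictly less than $d$, consistent with the range $\deg(f)\leq d$). Since $|a_j|\leq H$ and $m_i\geq 1$, one has the crucial bound
\[
|f(m_i)|\leq \sum_{j=0}^{d}|a_j|m_i^j\leq H\sum_{j=0}^{d}m_i^j\leq (d+1)m_i^dH = X_i,
\]
so that $f(m_i)$ always lies in the interval $[-X_i,X_i]$ seen by $S(\cdot,X_i)$.

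First I would record the orthogonality relation: for an integer $N$ with $|N|\leq X$,
\[
\lambda(N)=\frac{1}{2\pi}\int_{-\pi}^{\pi}\overline{S(\alpha,X)}\,e^{iN\alpha}\,\mathrm{d}\alpha,
\]
which holds because $\overline{S(\alpha,X)}=\sum_{|n|\leq X}\lambda(n)e^{-in\alpha}$ (as $\lambda$ is real-valued) and $\frac{1}{2\pi}\int_{-\pi}^{\pi}e^{i(N-n)\alpha}\,\mathrm{d}\alpha=\mathds{1}(n=N)$. Applying this with $N=f(m_i)$ and $X=X_i$ for each $i$, and multiplying over $1\leq i\leq L$, gives
\[
\prod_{i=1}^{L}\lambda(f(m_i))=\frac{1}{(2\pi)^L}\int_{(-\pi,\pi]^L}\left[\prod_{i=1}^{L}\overline{S(\alpha_i,X_i)}\right]e^{i\sum_{i=1}^{L}f(m_i)\alpha_i}\,\mathrm{d}\boldsymbol{\alpha}.
\]

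Next I would reorganize the phase. Writing $f(m_i)=\sum_{j=0}^{d}a_jm_i^j$ and interchanging the order of summation yields $\sum_{i=1}^{L}f(m_i)\alpha_i=\sum_{j=0}^{d}a_j\bigl(\sum_{i=1}^{L}m_i^j\alpha_i\bigr)$. Substituting this into the display, summing over all tuples $(a_0,\ldots,a_d)$, and interchanging the finite sum with the integral (legitimate, since every factor in sight is a finite trigonometric polynomial and hence bounded), the summand factors across the coefficients:
\[
\sum_{(a_0,\ldots,a_d)}\prod_{j=0}^{d}e^{ia_j(\sum_i m_i^j\alpha_i)}=\prod_{j=0}^{d}\left(\sum_{a=-H}^{H}e^{ia(\sum_i m_i^j\alpha_i)}\right)=\prod_{j=0}^{d}D_H\left(\sum_{i=1}^{L}m_i^j\alpha_i\right)
\]
by the definition of the Dirichlet kernel. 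Assembling the pieces produces exactly the claimed identity.

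There is no serious obstacle here; the only point requiring genuine care is the verification that the truncation parameter $X_i=(d+1)m_i^dH$ is large enough that no value of $f(m_i)$ is lost, which is precisely the elementary bound recorded in the first paragraph. I note that the hypothesis $L\leq d+1$ plays no role in this identity itself, which holds for every $L\geq 1$; it is stated only to match the range in which the lemma is subsequently applied.
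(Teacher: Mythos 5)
Your proof is correct and follows essentially the same route as the paper's: both detect the values $f(m_i)$ by Fourier orthogonality on the torus, use the bound $|f(m_i)|\leq (d+1)m_i^dH$ to justify the truncation in $S$, and factor the coefficient sum into the product of Dirichlet kernels. The only difference is cosmetic (you fold the auxiliary summation variables $s_i$ directly into the identity for $\lambda(N)$, whereas the paper introduces them explicitly before integrating), and your closing remark that $L\leq d+1$ is not needed for the identity itself is accurate.
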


\begin{proof}
    Writing $\mathds{1}(r=s)$ to be the indicator function for the condition $r=s$, we have 
    \[
    \sum_{\substack{f\in \Z[t],\mathrm{deg}(f)\leq d \\ \|f\|\leq H}} \left[\prod_{i=1}^{L}\lambda(f(m_i))\right] = \mathop{\sum\cdots\sum}_{\substack{s_1,\ldots s_L\in\Z\\ |s_i|\leq 
    (d+1) m_i^d H}} \left[\prod_{i=1}^{L}\lambda(s_i)\right]\sum_{\substack{f\in \Z[t],\mathrm{deg}(f)\leq d \\ \|f\|\leq H}} \left[\prod_{i=1}^{L}\mathds{1}(f(m_i)=s_i)\right].
    \]
    Then, by using the identity,
    \[
    \mathds{1}(r=s) = \frac{1}{2\pi}\int_{(-\pi,\pi]}e^{i(s-r)\alpha}\mathrm{d}\alpha
    \]
    for each of the $s_i$ and writing $f(m_i) = \sum_{j=0}^{d}c_jm_i^j$ we obtain
    \[
    \sum_{\substack{f\in \Z[t],\mathrm{deg}(f)\leq d \\ \|f\|\leq H}} \left[\prod_{i=1}^{L}\mathds{1}(f(m_i)=s_i)\right] = \frac{1}{(2\pi)^L}\int_{(-\pi,\pi]^L}\left[\prod_{i=1}^{L}e^{-is_i\alpha_i}\right]\mathop{\sum\cdots\sum}_{\substack{c_0,\ldots,c_d\in\Z\\ |c_j|\leq H}} \left[\prod_{i=1}^{L}e^{i\left(\sum_{j=0}^{d}c_j m_i^j\right)\alpha_i}\right]\mathrm{d}\boldsymbol{\alpha}.
    \]
    Rearranging the $d+1$-fold sum inside the integral will give the product of Dirichlet kernels, and so, by swapping the sum over the $s_i$ and the integral, we obtain:
    \[
    \sum_{\substack{f\in \Z[t],\mathrm{deg}(f)\leq d \\ \|f\|\leq H}} \left[\prod_{i=1}^{L}\lambda(f(m_i))\right] = \frac{1}{(2\pi)^L}\int_{(-\pi,\pi]^L}\mathop{\sum\cdots\sum}_{\substack{s_1,\ldots s_L\in\Z\\ |s_i|\leq 
    (d+1) m_i^d H}}\hspace{-6pt}\left[\prod_{i=1}^{L}\lambda(s_i)e^{-is_i\alpha_i}\right]\hspace{-5pt}\left[\prod_{j=0}^{d}D_H\left(\sum_{i=1}^{L}m_i^j\alpha_i\right)\right]\mathrm{d}\boldsymbol{\alpha}.
    \]
    The sum over the $s_i$ is then clearly
    \[
    \prod_{i=1}^{L}\overline{S(\alpha_i,(d+1)m_i^dH)},
    \]
    which concludes the proof.
\end{proof}

In light of this lemma we will define
\[
I_{d,L}(H,\b{m}) = \frac{1}{(2\pi)^L}\int_{(-\pi,\pi]^L} \left[\prod_{i=1}^{L}\overline{S(\alpha_i,(d+1)m_i^d H)}\right] \left[\prod_{j=0}^{d}D_H\left(\sum_{i=1}^{L}m_i^j\alpha_i\right)\right] \mathrm{d}\boldsymbol{\alpha}
\]
for $H\geq 2$, integers $d,L$ such that $1\leq L\leq d+1$ and $\b{m}\in \N^{L}$. Our next lemma bounds these integrals, given some necessary assumptions on $\b{m}$.

\begin{lemma}\label{circlemethodbound}
Let $H\geq 2$ and let $L,d$ be integers such that $1\leq L\leq d+1$. Suppose $\b{m}\in\N^L$ is a vector of pairwise distinct, positive integers. Then for any $A>0$
\[
I_{d,L}(H,\b{m}) \ll_{A,d,L} \frac{\left(\prod_{i=1}^{L}m_i\right)^{\frac{1}{2}(L-1)(L-2)+d}H^{d+1}}{\left(\prod_{1\leq i<j\leq L}|m_i-m_j|\right)(\log H)^{A}}
\]
where the implied constant depends at most on $A,L$ and $d$.    
\end{lemma}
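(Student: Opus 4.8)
The plan is to estimate the integral $I_{d,L}(H,\b{m})$ produced by Lemma~\ref{circlemethod} head on, playing the cancellation stored in the factors $S(\alpha_i,(d+1)m_i^d H)$ (via Corollary~\ref{Davenportforlambda}) against the mass of the Dirichlet kernels (via Lemma~\ref{DKBOUND}). The central device is a linear change of variables. Among the $d+1$ kernels $D_H(\sum_i m_i^j\alpha_i)$, $j=0,\dots,d$, I would single out the indices $j=0,1,\dots,L-1$ and set $\beta_k=\sum_{i=1}^{L}m_i^{k-1}\alpha_i$ for $k=1,\dots,L$. The matrix $(m_i^{\,k-1})_{k,i}$ is a Vandermonde matrix, so its determinant is $\prod_{1\le i<j\le L}(m_j-m_i)$; the hypothesis that $\b{m}$ have pairwise distinct entries is exactly what makes this nonzero, and this determinant is the quantity $\prod_{i<j}|m_i-m_j|$ that must appear in the denominator of the claimed bound.

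For the execution I would first bound each factor $|\overline{S(\alpha_i,(d+1)m_i^d H)}|$ by its supremum: Corollary~\ref{Davenportforlambda} gives $\sup_{\alpha}|S(\alpha,(d+1)m_i^d H)|\ll_{A,d}m_i^{d}H/(\log H)^{A}$ uniformly in $\alpha_i$, so this estimate is untouched by the substitution and the product of the $L$ factors supplies the $(\prod_i m_i)^{d}H^{L}$ part of the numerator together with the saving $(\log H)^{-A}$ (after relabelling $A$). It then remains to treat $\int\prod_{j=0}^{d}|D_H(\cdots)|$. Under the substitution the $L$ distinguished kernels become $\prod_{k=1}^{L}|D_H(\beta_k)|$, and each $\beta_k$-integral is absorbed by Lemma~\ref{DKBOUND} at cost $\ll\log H$; the surplus $d+1-L$ kernels, indexed by $j=L,\dots,d$, I would bound trivially by $\|D_H\|_\infty\ll H$, contributing $H^{d+1-L}$ and completing the total power $H^{d+1}$.

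The step I expect to be the main obstacle is making the Jacobian $\prod_{i<j}|m_i-m_j|$ genuinely \emph{survive} in the denominator. Because the integrand is $2\pi$-periodic in each $\alpha_i$, a naive extension of the $\beta_k$-integrals over a full period reintroduces the determinant as a covering multiplicity and cancels the Jacobian, so that the crude arrangement above only yields the weaker bound without the factor $\prod_{i<j}|m_i-m_j|^{-1}$; retaining this factor forces one to exploit cancellation in \emph{all} $L$ variables simultaneously over the sheared (non-product) image of $(-\pi,\pi]^{L}$, which is the analytic reflection of the sublattice density $\big(\det(m_i^{\,j})\big)^{-1}$ governing how often the values $(f(m_1),\dots,f(m_L))$ land in a prescribed residue class. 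Making this survive is precisely where the surplus power $(\prod_i m_i)^{\frac12(L-1)(L-2)}$ in the numerator should enter, as the unavoidable loss incurred in decoupling the sheared region while keeping the cancellation; I would carry this out by Fourier-expanding the indicator of the sheared box (equivalently, by using the exact identity $I_{d,L}=\sum_{\|f\|\le H}\prod_i\lambda(f(m_i))$ and detecting the relevant congruences by additive characters, applying Corollary~\ref{Davenportforlambda} uniformly in the frequency). Finally I would dispose separately of the range in which $\prod_{i<j}|m_i-m_j|$ is large enough that the asserted estimate already exceeds the trivial bound $I_{d,L}\ll H^{d+1}$, so that the circle-method input is only needed when the $m_i$ are comparatively close together.
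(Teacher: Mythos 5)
Your setup coincides with the paper's: bound each factor $S(\alpha_i,(d+1)m_i^dH)$ by its supremum via Corollary~\ref{Davenportforlambda}, bound the $d+1-L$ kernels with $j\geq L$ trivially by $H$, and change variables $t_j=\sum_{i=1}^{L}m_i^j\alpha_i$ through the Vandermonde matrix $V_L(\b{m})$, whose determinant $\prod_{i<j}(m_j-m_i)$ is the quantity in the denominator. Up to that point you are reproducing the paper's argument exactly.

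The gap is in how you finish. You assert that the ``naive'' decoupling of the $t_j$-integrals by periodicity reintroduces the determinant as a covering multiplicity and thereby cancels the Jacobian, and on that basis you replace the final step with an unexecuted plan (Fourier-expanding the indicator of the sheared image, or detecting congruences by additive characters --- the latter simply undoes Lemma~\ref{circlemethod} and returns you to the original counting problem). This is a misdiagnosis: the naive completion is what the paper does, and it works. Since the integrand is nonnegative, one encloses the image of $(-\pi,\pi]^L$ under $V_L(\b{m})$ in the product box $\prod_{j=0}^{L-1}(-\mathcal{M}_j\pi,\mathcal{M}_j\pi]$ with $\mathcal{M}_j=\sum_{i=1}^{L}m_i^j$; by the $2\pi$-periodicity and evenness of $D_H$, each one-dimensional integral over $(-\mathcal{M}_j\pi,\mathcal{M}_j\pi]$ costs a factor $\ll\mathcal{M}_j$ times the integral over one period, which Lemma~\ref{DKBOUND} bounds by $\log H$. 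The multiplicity of this covering is $\prod_{j=0}^{L-1}\mathcal{M}_j$, \emph{not} $\lvert\det V_L(\b{m})\rvert$ (only an exact unfolding over the torus, which is not what is being done, would cancel the determinant), and this product of the $\mathcal{M}_j$ is precisely the source of the surplus power of $\prod_i m_i$ in the numerator of the statement, while the Jacobian $\lvert\det V_L(\b{m})\rvert^{-1}=\prod_{i<j}\lvert m_i-m_j\rvert^{-1}$ survives untouched. So your instinct about where the extra power of $\prod_i m_i$ comes from is right, but the decisive step --- simultaneously extracting the $(\log H)^L$ saving and retaining the factor $\prod_{i<j}\lvert m_i-m_j\rvert^{-1}$ --- is exactly the step you declare problematic and do not carry out; as written the proof is incomplete, and the workaround you sketch is both unnecessary and not obviously executable.
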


\begin{proof}
Using the triangle inequality we write,
\[
I_{d,L}(H,\b{m}) \leq \frac{1}{(2\pi)^L}\int_{(-\pi,\pi]^L}\prod_{i=1}^{L}\left\lvert S(\alpha_i,(d+1)m_i^d H)\right\rvert\prod_{j=L}^{d}\left\lvert D_H\left(\sum_{i=1}^{L}m_i^j\alpha_i\right) \right\rvert \prod_{j=0}^{L-1}\left\lvert D_H\left(\sum_{i=1}^{L}m_i^j\alpha_i\right) \right\rvert\mathrm{d}\boldsymbol{\alpha}.
\]
We then use the bounds
\[
\left\lvert S(\alpha_i,(d+1)m_i^d H)\right\rvert \ll_{B,d} \frac{m_i^d H}{(\log m_i^d H)^B}\ll_{B,d} \frac{m_i^d H}{(\log H)^B}\;\;\text{and}\;\;\left\lvert D_H\left(\sum_{i=1}^{L}m_i^j\alpha_i\right)\right\rvert \ll H
\]
for the terms in the first and second products respectively. The first bound is obtained from Corollary \ref{Davenportforlambda} for some $B>0$, while the second is just the trivial bound. Doing this will yield,
\[
I_{d,L}(H,\b{m}) \ll_{B,d} \frac{\left(\prod_{i=1}^{L}m_i^d\right)H^{d+1}}{(\log H)^{B}}\frac{1}{(2\pi)^L}\int_{(-\pi,\pi]^L} \prod_{j=0}^{L-1}\left\lvert D_H\left(\sum_{i=1}^{L}m_i^j\alpha_i\right) \right\rvert\mathrm{d}\boldsymbol{\alpha}.
\]
Next we use the change of variables $t_j = \sum_{i=1}^{L} m_i^j \alpha_i$, whose matrix is the $L\times L$ Vandermode matrix, $V_L(\b{m})$, with determinant
\[
\det(V_L(\b{m})) = \prod_{1\leq i<j\leq L}(m_j-m_i)
\]
which is non-zero since we assume the $m_i$ are pairwise distinct. This change of variables therefore yields
\[
I_{d,L}(H,\b{m}) \ll_{B,d} \frac{\left(\prod_{i=1}^{L}m_i^d\right)H^{d+1}}{\lvert \det(V_L(\b{m}))\rvert(\log H)^{B}}\frac{1}{(2\pi)^L}\int_{\prod_{j=0}^{L-1}(-\mathcal{M}_j\pi,\mathcal{M}_j\pi]} \prod_{j=0}^{L-1}\left\lvert D_H\left(t_j\right) \right\rvert\mathrm{d}\b{t}
\]
where we have set $\mathcal{M}_j = \sum_{i=1}^{L}m_i^j$. Next we use the fact that $D_H(t)$ is a $2\pi$-periodic, even function to write
\[
\frac{1}{(2\pi)^L}\int_{\prod_{j=0}^{L-1}(-\mathcal{M}_j\pi,\mathcal{M}_j\pi]} \prod_{j=0}^{L-1}\left\lvert D_H\left(t_j\right) \right\rvert\mathrm{d}\b{t} \ll_{L} \left(\prod_{j=0}^{L-1}\mathcal{M}_j\right)\int_{(-\pi,\pi]^L} \prod_{j=0}^{L-1}\left\lvert D_H\left(t_j\right) \right\rvert\mathrm{d}\b{t}.
\]
Thus, using Lemma \ref{DKBOUND} this becomes
\[
= \left(\prod_{j=0}^{L-1}\mathcal{M}_j\right)\left(\int_{(-\pi,\pi]}\left\lvert D_H\left(t\right) \right\rvert\mathrm{d}t\right)^L \ll  \left(\prod_{j=0}^{L-1}\mathcal{M}_j\right)(\log H)^L.
\]
Thus we have proven that
\[
I_{d,L}(H,\b{m}) \ll_{B,d,L} \frac{\left(\prod_{j=0}^{L-1}\mathcal{M}_j\right)\left(\prod_{i=1}^{L}m_i^d\right)H^{d+1}}{\lvert \det(V_L(\b{m}))\rvert(\log H)^{B-L}}.
\]
Finally, we note that
\[
\prod_{j=0}^{L-1}\mathcal{M}_j = \sum_{\substack{\b{h}\in [0,\frac{1}{2}(L-1)(L-2)]^L\\ \sum_{j=1}^L h_i = \frac{1}{2}(L-1)(L-2)}} c_{\b{h}}\left[\prod_{j=1}^{L}m_j^{h_j}\right] \ll_L \prod_{j=1}^{L}m_j^{\frac{1}{2}(L-1)(L-2)},
\]
where we note that $c_{\b{h}}$ are some constants depending only on $\b{h}$ and whose sum over the given region is therefore only dependent on $L$. Upon setting $B=A+L$ we are therefore done.
\end{proof}

\section{Proof of Theorem \ref{GENERALTHEOREM}}
We begin by noting that the conditions $\deg(f_i)=d_i$ only dictates that the leading coefficient of each $f_i$ is not $0$. The contribution from one of the leading coefficients being $0$ is trivially $O(x^{k/2}H^{D-1})$ which is sufficient for our purposes. Thus we have
\[
\mathop{\sum\cdots\sum}_{\substack{f_1,\cdots,f_s\in\Z[t]\\ \deg(f_i)=d_i,\|f_i\|\leq H \\\forall 1\leq i\leq s}}\hspace{-7.5pt}\left[\frac{1}{x^{1/2}}\sum_{x\leq n\leq 2x}\prod_{i=1}^{s}\lambda(f_i(n))\right]^{k} = \hspace{-5pt}\mathop{\sum\cdots\sum}_{\substack{f_1,\cdots,f_s\in\Z[t]\\ \deg(f_i)\leq d_i,\|f_i\|\leq H \\\forall 1\leq i\leq s}}\hspace{-7.5pt}\left[\frac{1}{x^{1/2}}\sum_{x\leq n\leq 2x}\prod_{i=1}^{s}\lambda(f_i(n))\right]^{k} + O(x^{k/2}H^{D-1})
\]
and we may focus on the right hand side. To do so we first expand the $k$-th power on the right hand side using the multinomial theorem. This will yield
\[
=\mathop{\sum\cdots\sum}_{\substack{f_1,\cdots,f_s\in\Z[t]\\ \deg(f_i)\leq d_i,\|f_i\|\leq H \\\forall 1\leq i\leq s}}\frac{1}{x^{k/2}}\sum_{u=1}^{k}\sum_{\substack{l_1,\ldots,l_u\geq 1\\l_1+\ldots+l_u=k}}\frac{k!}{l_1!\cdots l_u!}\mathop{\sum\cdots\sum}_{x\leq m_1<\ldots<m_u\leq 2x}\prod_{i=1}^{s}\prod_{j=1}^{u}\lambda(f_i(m_j))^{l_j}.
\]
We split the sum over $l_i$ into two cases: one where all $l_j$ are even and a second where at least one $l_i$ is odd. The overall expression is therefore the sum of the following two:
\begin{align*}
N_0(x,H) = \frac{1}{x^{k/2}}\mathop{\sum\cdots\sum}_{\substack{f_1,\cdots,f_s\in\Z[t]\\ \deg(f_i)\leq d_i,\|f_i\|\leq H \\\forall 1\leq i\leq s}}\sum_{u=1}^{k}\sum_{\substack{l_1,\ldots,l_u\geq 1\\l_1+\ldots+l_u=k\\\text{all $l_j$ even}}}&\frac{k!}{l_1!\cdots l_u!}\mathop{\sum\cdots\sum}_{x\leq m_1<\ldots<m_u\leq 2x}1.\\
N_1(x,H) = \frac{1}{x^{k/2}}\mathop{\sum\cdots\sum}_{\substack{f_1,\cdots,f_s\in\Z[t]\\ \deg(f_i)\leq d_i,\|f_i\|\leq H \\\forall 1\leq i\leq s}}\sum_{u=1}^{k}\sum_{\substack{l_1,\ldots,l_u\geq 1\\l_1+\ldots+l_u=k\\\text{at least one}\\ l_j \text{odd}}}&\frac{k!}{l_1!\cdots l_u!}\mathop{\sum\cdots\sum}_{x\leq m_1<\ldots<m_u\leq 2x}\prod_{i=1}^{s}\prod_{j=1}^{u}\lambda(f_i(m_j))^{l_j}.
\end{align*}
We first deal with $N_0(x,H)$. This is $0$ if $k$ is odd since all partitions of odd numbers must contain an odd number. If $k$ is even and all $l_i$ are even, then $u\leq k/2$. Thus we write,
\begin{align*}
N_0(x,H) &= \frac{1}{x^{k/2}}\mathop{\sum\cdots\sum}_{\substack{f_1,\cdots,f_s\in\Z[t]\\ \deg(f_i)\leq d_i,\|f_i\|\leq H \\\forall 1\leq i\leq s}}\sum_{u=1}^{k/2}\sum_{\substack{l_1,\ldots,l_u\geq 1\\l_1+\ldots+l_u=k\\\text{all $l_j$ even}}}\frac{k!}{l_1!\cdots l_u!}\left(\frac{x^u}{u!}+O\left(x^{u-1}\right)\right).
\end{align*}
The contribution from $u<k/2$ is therefore trivially $\ll_k x^{k/2-1}H^{D}$. When $u=k/2$, all of the $l_i$ must be $2$. Thus,
\begin{align*}
N_0(x,H) &= \frac{1}{x^{k/2}}\mathop{\sum\cdots\sum}_{\substack{f_1,\cdots,f_s\in\Z[t]\\ \deg(f_i)\leq d_i,\|f_i\|\leq H \\\forall 1\leq i\leq s}}\frac{k!}{2^{k/2}(k/2)!}\left(x^{k/2}+O\left(x^{k/2-1}\right)\right) + O_k\left(\frac{H^{D}}{x}\right)\\
&=(k-1)!!2^D H^D\left(1 + O_k\left(\frac{1}{x}+\frac{1}{H}\right)\right).
\end{align*}
This will clearly give the desired main term since $x\leq (\log H)^{\delta}$. We are left to bound $N_1(x,H)$. By swapping the orders of summations we have
\[
N_1(x,H) = \frac{1}{x^{k/2}}\sum_{u=1}^{k}\sum_{\substack{l_1,\ldots,l_u\geq 1\\l_1+\ldots+l_u=k\\\text{at least one}\\ l_j \text{odd}}}\frac{k!}{l_1!\cdots l_u!}\mathop{\sum\cdots\sum}_{\substack{x\leq m_1<\ldots<m_u\leq 2x}}\prod_{i=1}^{s}\left[\mathop{\sum}_{\substack{f_i\in\Z[t]\\ \deg(f_i)\leq d_i,\|f_i\|\leq H}}\prod_{\substack{j=1\\l_j\;\text{odd}}}^{u}\lambda(f_i(m_j))\right].
\]
Next we aim to apply Lemmas \ref{circlemethod} and \ref{circlemethodbound} to each of the inner sums, using $$L=L(\b{l})=\#\{1\leq j\leq u:l_j\;\text{odd}\}\leq k.$$ However, since we only obtain arbitrary logarithmic saving from each of these we need only do it on one. For this reason we will assume without loss in generality that $d_1=\mathfrak{d}$ and only apply the lemmas to the sum over $f_1$, as we then have that $k\leq d_1 + 1$. For the other $f_i$ we will use the trivial bound. Thus:
\[
N_1(x,H) \ll_k \frac{H^{D-d_1-1}}{x^{k/2}}\sum_{u=1}^{k}\sum_{\substack{l_1,\ldots,l_u\geq 1\\l_1+\ldots+l_u=k\\\text{at least one}\\ l_j \text{odd}}}\mathop{\sum\cdots\sum}_{\substack{x\leq m_1<\ldots<m_u\leq 2x}}\left\lvert\mathop{\sum}_{\substack{f_1\in\Z[t]\\ \deg(f_1)\leq d_1,\|f_1\|\leq H}}\prod_{\substack{j=1\\l_j\;\text{odd}}}^{|\b{l}|}\lambda(f_i(m_j))\right\rvert.
\]
We may now apply Lemmas \ref{circlemethod} and \ref{circlemethodbound} to the inner sum with $L=L(\b{l})$. This will yield:
\begin{align*}
N_1(x,H) &\ll_{B,d_1,k} \frac{H^{D}}{x^{k/2}(\log H)^{B}}\sum_{u=1}^{k}\sum_{\substack{l_1,\ldots,l_u\geq 1\\l_1+\ldots+l_u=k\\\text{at least one}\\ l_j \text{odd}}}\mathop{\sum\cdots\sum}_{\substack{x\leq m_1<\ldots<m_u\leq 2x}} \left(\prod_{\substack{j=1\\l_j\;\text{odd}}}^{u}m_j\right)^{\frac{1}{2}(L(\b{l})-1)(L(\b{l})-2)+d_1}\\
&\ll_{B,d_1,k} \frac{H^{D}}{x^{k/2}(\log H)^{B}}\sum_{u=1}^{k}\sum_{\substack{l_1,\ldots,l_u\geq 1\\l_1+\ldots+l_u=k\\\text{at least one}\\ l_j \text{odd}}} x^{v(\b{l})}(\log x)^{L(\b{l})-1}.
\end{align*}
where
\[
v(\b{l}) = \frac{1}{2}(L(\b{l})-1)(L(\b{l})-2)+d_1+1+|\b{l}|-L(\b{l}) \leq \frac{1}{2}(k-1)(k-2)+d_1 +1.
\]
Thus, using $x\leq (\log H)^{\delta}$ and using $B = (A + \delta(\frac{1}{2}k^2 - 2k + d_1 + 1)+1)$,
\begin{align*}
N_1(x,H) &\ll_{A,d_1,k} \frac{H^{D}}{(\log H)^{A}}
\end{align*}
as required.

\section{Proof of Corollaries \ref{lowerboundsonavg} and \ref{upperboundonaverage}}
Define $$S_f(x) = \frac{1}{x^{1/2}}\left|\sum_{x\leq n\leq 2x}\lambda(f(n))\right|$$
for $f\in\Z[t]$. Denote by $\P(S_f(x)>y)$ the probability of the event $S_f(x)>y$, by $\E[S_f(x)]$ the expected value of $S_f(x)$ over random polynomials of degree $d$ and coefficients bounded in magnitude by $H$. Denote by $\E_{S_f(x)>y}[S_f(x)]$ the expected value of $S_f(x)$ over those polynomials which satisfy $S_f(x)>y$.

\subsection{Proof of Corollary \ref{lowerboundsonavg}} 
For Corollary \ref{lowerboundsonavg} let us first prove the following claim:
\[
\mathcal{C}_{2m} > \left(\frac{m}{2}\right)^{m}
\]
for all $m\in\N$. Indeed we have
\[
\mathcal{C}_{2m} = (2m-1)!! = \frac{(2m-1)!}{2^{m-1}(m-1)!} = \frac{(2m-1)\cdots(m)}{2^{m-1}}>\frac{m^m}{2^{m-1}}>\frac{m^m}{2^m}
\]
Now take $m$ to be the largest integer such that $4m\leq d+1$. Note that this implies that $2m\geq \frac{d-2}{2}$. We have the following:
\[
\E[S_f(X)^{2m}] = \E_{S_f(X)\leq y}[S_f(X)^{2m}] + \E_{S_f(X)>y}[S_f(X)^{2m}] \leq y^{2m} + \E_{S_f(X)>y}[S_f(X)^{2m}].
\]
Now using the Cauchy--Schwarz inequality on $\E_{S_f(X)>y}[S_f(X)^{2m}]$ we obtain the following:
\[
\P(S_f(X)>y)\geq \frac{\left(\E[S_f(X)^{2m}]-y^{2m}\right)^2}{\E[S_f(X)^{4m}]}.
\]
Using the asymptotics for $2m$-th and $4m$-th moments gives us
\[
\P(S_f(X)>y)\geq \frac{\left(\mathcal{C}_{2m} + O\left(\frac{1}{X}\right) + O_{m,d,A}\left(\frac{1}{(\log H)^{A}}\right)-y^{2m}\right)^2}{\mathcal{C}_{4m} + O\left(\frac{1}{X}\right) + O_{m,d,A}\left(\frac{1}{(\log H)^{A}}\right)}.
\]
Since $X=X(H)$ goes to infinity with $H$ by assumption, we have that, for all $\epsilon>0$, there exists some $H_0(\epsilon,d,A)$ such that for $H\geq H_0$,
\[
\P(S_f(X)>y)\geq \frac{\left(\mathcal{C}_{2m} - \epsilon-y^{2m}\right)^2}{\mathcal{C}_{4m} + \epsilon}.
\]
Therefore, using the conditions $y \leq \left(\frac{d-2}{8}\right)^{\frac{d-2}{2(d+1)}}$, $2m\geq \frac{d-2}{2}$ and $4m\leq d+1$ with the claim above we have
\[
\mathcal{C}_{2m}>\left(\frac{m}{2}\right)^{m} \geq \left(\frac{d-2}{8}\right)^{\frac{d-2}{4}}\geq y^{\frac{d+1}{2}} \geq y^{2m}.
\]
Therefore, choosing $\epsilon>0$ sufficiently small and $H\geq H_0(\epsilon,d,A)$ we will have
\[
\P(S_f(X)>y) > 0.
\]

\subsection{Proof of Corollary \ref{upperboundonaverage}} For Corollary \ref{upperboundonaverage}, we use Chebychev's inequality to obtain
\begin{align*}
\#\{f\in\Z[t]:\deg(f)=d,\;\|f\|\leq H,\;S_f(x)>y\} &\leq (2H)^{d+1} \frac{\E[S_f(x)^2]}{y^2}\\
&\leq \frac{2^{d+1}H^{d+1}}{y^2} + \left(\frac{H^{d+1}}{y^2x}\right) + \left(\frac{H^{d+1}}{y^2(\log H)^A}\right)\\
&\ll \frac{2^{d+1}H^{d+1}}{y^2}.
\end{align*}

\section{Proof of Theorem \ref{SECONDARY}}
For the first moment one only needs to swap the order of summation and apply Lemmas \ref{circlemethod} and \ref{circlemethodbound}. We therefore focus on the second moment. We begin in a similar way as in the proof of Theorem \ref{GENERALTHEOREM}, adding in terms for which the leading coefficients of the polynomials being $0$, expanding the $k$-th power and splitting off the diagonal terms which have no oscillation to obtain,
\[
\mathop{\sum\cdots\sum}_{\substack{f_1,\cdots,f_s\in\Z[t]\\ \deg(f_i)=d_i,\|f_i\|\leq H \\\forall 1\leq i\leq s}}\hspace{-7.5pt}\left[\frac{1}{x^{1/2}}\sum_{x\leq n\leq 2x}b_n\prod_{i=1}^{s}\lambda(f_i(n))\right]^{2} = M_0(x,H) + M_1(x,H)
\]
where
\[
M_0(x,H) = \frac{1}{x}\mathop{\sum\cdots\sum}_{\substack{f_1,\cdots,f_s\in\Z[t]\\ \deg(f_i)\leq d_i,\|f_i\|\leq H \\\forall 1\leq i\leq s}}\mathop{\sum}_{x\leq m\leq 2x}|b_m|^2.
\]
and
\[
M_1(x,H) = \frac{1}{x}\mathop{\sum\cdots\sum}_{\substack{f_1,\cdots,f_s\in\Z[t]\\ \deg(f_i)\leq d_i,\|f_i\|\leq H \\\forall 1\leq i\leq s}}2\mathop{\sum\cdots\sum}_{x\leq m_1<m_2\leq 2x}b_{m_1}\overline{b_{m_2}}\prod_{i=1}^{s}\lambda(f_i(m_1))\lambda(f_i(m_2)).
\]
$M_1(x,H)$ may be handled using the same method as for $N_1(x,H)$ in the proof of Theorem \ref{GENERALTHEOREM} since the complex sequence $b_n$ is bounded in magnitude by $1$. Thus,
\[
M_1(x,H) \ll_{A,\mathfrak{d}} \frac{H^D}{(\log H)^A}.
\]
For the diagonal terms, we clearly have
\[
M_0(x,H) = \frac{1}{x}\left(\sum_{x\leq m\leq 2x}|b_m|^2\right)\left(2^{D}H^{D}+O(H^{D-1})\right).
\]
This is enough to conclude the result.

\end{document}